\title[Effective indices of subgroups in B--P groups]{Effective indices of subgroups in Baumslag--Pride groups with free quotients}
\newtheorem{thm}{Theorem}
\newtheorem{cor}[thm]{Corollary}
\numberwithin{thm}{section}
\newcommand{\al}{\alpha}
\newcommand{\bZ}{\mathbb{Z}}
\DeclareMathOperator{\Aut}{Aut}
\author[T. Koberda]{Thomas Koberda}
\address{Department of Mathematics\\ Harvard University\\ 1 Oxford St.\\ Cambridge, MA 02138 }
\email{ koberda@math.harvard.edu}
\subjclass{Primary 20E05; Secondary 20E36}
\keywords{One-relator groups, automorphisms of free groups, combinatorial group theory, large groups}
\begin{document}
\begin{abstract}
Given a group with at least two more generators than relations, we give an effective estimate on the minimal index of a subgroup with a nonabelian free quotient.  We show that the index is bounded by a polynomial in the length of the relator word.  We also provide a lower bound on the index.
\end{abstract}
\maketitle
\begin{center}
\today
\end{center}
\section{Introduction and Statement of Results}
Let $G$ be a finitely presented group.  We say that $G$ is a {\bf Baumslag-Pride group} or {\bf BP--group} if it admits a presentation $G=\langle S\mid R\rangle$, with $|S|\geq |R|+2$.  In particular, if $G$ is a one-relator group with at least three generators, then $G$ is a BP--group.  The terminology stems from a paper of those two authors, where they prove that every such group contains a finite index subgroup which admits a surjection onto a nonabelian free group, i.e. $G$ is {\bf large} (see \cite{BP}).

In their proof, Baumslag and Pride explicitly produce the subgroup $H$.  They choose a special presentation for $G$ in which they assume that a particular generator appears with zero exponent sum in all the relator words.  In the sequel, we shall call such presentations {\bf good}.  When $G$ is a BP--group, it is always possible to find a good presentation.  Once such a presentation is found, it is possible to produce $H$ in such a way so that $[G:H]$ is no more than linear in the length of the longest relator word.

In general, one will not be so lucky as to be given a good presentation.  Given a relator word $w\in F_n$, there is always an automorphism $\al\in \Aut(F_n)$ such that $\al(w)$ has zero exponent sum in some generator, but the word length $\ell(\al(w))$ might be somewhat longer than $\ell(w)$.  The main result of this note is:

\begin{thm}\label{t:main}
Let $w\in F_n$, $n\geq 2$, and fix a free generating set for $F_n$.  Then there is a polynomial $p_n$ depending only on $n$ and an $\al\in \Aut(F_n)$ such $\al(w)$ has zero exponent sum in at least one of the generators and such that $\ell(\al(w))\leq p_n(\ell(w))$.
\end{thm}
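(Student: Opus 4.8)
The plan is to pass to the abelianization, reinterpret the desired conclusion as a statement about integer vectors, and then realize the required change of basis by a short product of elementary Nielsen automorphisms whose cumulative effect on word length I can control. Write $x_1,\dots,x_n$ for the fixed free basis, and let $v=(e_1,\dots,e_n)\in\mathbb{Z}^n$ be the image of $w$ under abelianization $F_n\to\mathbb{Z}^n$, so that $e_i$ is the exponent sum of $x_i$ in $w$ and $|e_i|\le\ell(w)$. Since the natural homomorphism $\Aut(F_n)\to\mathrm{GL}_n(\mathbb{Z})$ intertwines the automorphism action on $F_n$ with the linear action on $\mathbb{Z}^n$, the exponent sum vector of $\alpha(w)$ is $Av$, where $A$ is the matrix of $\alpha$; requiring $\alpha(w)$ to have a vanishing exponent sum in some generator is exactly requiring $Av$ to have a zero coordinate. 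If some $e_i$ already vanishes I take $\alpha=\mathrm{id}$, so I may assume every $e_i\ne 0$; in particular $e_1,e_2\ne 0$.

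First I run the Euclidean algorithm on the pair $(e_1,e_2)$, realized by the transvections $x_1\mapsto x_1 x_2^{\pm q}$ and $x_2\mapsto x_2 x_1^{\pm q}$, which respectively alter the coordinate $e_2$ by $\pm q e_1$ and $e_1$ by $\pm q e_2$ while leaving $e_3,\dots,e_n$ untouched. A suitable alternating sequence of such moves, with the sign and exponent at each stage dictated by the corresponding quotient of the Euclidean algorithm, drives the pair to $(\pm\gcd(e_1,e_2),0)$, so that after the resulting automorphism $\alpha$ the generator $x_2$ has zero exponent sum. This settles the existence of $\alpha$; the content of the theorem is the length bound, and that is where the real work lies.

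To bound $\ell(\alpha(w))$ I track the effect of each transvection on length. If $\phi$ is the elementary automorphism $x_i\mapsto x_i x_j^{\pm q}$, then substituting into $w$ replaces each of the at most $\ell(w)$ occurrences of $x_i^{\pm 1}$ by a word of length $1+|q|$, so before free reduction $\ell(\phi(w))\le (1+|q|)\,\ell(w)$; since cancellation only shortens words this inequality survives reduction. Applying this at each of the $m$ steps of the Euclidean algorithm, with quotients $q_1,\dots,q_m$, gives $\ell(\alpha(w))\le\ell(w)\prod_{t=1}^m(1+q_t)$. I would then invoke two classical facts about the Euclidean algorithm on $(|e_1|,|e_2|)$: the product of the quotients is at most $\max(|e_1|,|e_2|)\le\ell(w)$ (because $q_t\le a_{t-1}/a_t$ for the successive remainders $a_t$, and the product telescopes), and the number of steps satisfies $m=O(\log\ell(w))$ by Lamé's theorem. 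Combining these, $\prod_{t=1}^m(1+q_t)\le 2^m\prod_t q_t\le\ell(w)^{O(1)}\cdot\ell(w)$, whence $\ell(\alpha(w))\le\ell(w)^{O(1)}=:p(\ell(w))$ is bounded by a polynomial of absolute, in particular $n$-independent, degree.

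The main obstacle is precisely this length estimate: a naive realization of a change of basis in $\mathrm{GL}_n(\mathbb{Z})$ as a product of elementaries risks an exponential blow-up, and the theorem is salvaged only because the two arithmetic features of the Euclidean algorithm conspire against each other. Large quotients cost a great deal per step but force the remainders to collapse quickly, while many steps are possible only when every quotient is small, so that the product $\prod_t(1+q_t)$ of per-step blow-up factors stays polynomial. I expect the only points requiring genuine care are the bookkeeping of signs in the transvections and the verification that the quotient-product bound $\prod_t q_t\le\max(|e_1|,|e_2|)$ holds with the indexing of remainders I use.
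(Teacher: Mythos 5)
Your proof is correct, and although it shares the paper's overall skeleton---realize the Euclidean algorithm on the exponent sums of two generators by elementary Nielsen transvections, observe that a transvection with quotient $q$ multiplies word length by at most $1+q$, and invoke the Lam\'e-type bound $m=O(\log\ell(w))$ on the number of steps---it diverges from the paper exactly at the step you identify as the real work, and your route there is genuinely different, simpler, and sharper. The paper bounds $\prod_i(c_i+1)$ by an inductive argument that the successive quotients decay like $\ell(w)/2^{i/2}$ (the maximum of the two exponent sums essentially halves every two steps), and then finishes with an asymptotic computation on logarithms in which a constant must be taken ``sufficiently large'' and the conclusion is only claimed for $\ell(w)$ sufficiently large; along the way it needs a case analysis (quotient equal to the maximum, quotients equal to $1$, termination when an exponent sum hits $1$). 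Your telescoping bound replaces all of this: writing $a_0>a_1>\cdots>a_m=\gcd\geq 1$ for the remainder sequence, the identity $q_t\le a_{t-1}/a_t$ gives $\prod_t q_t\le a_0/a_m\le\ell(w)$ outright (the indexing worry you flag is harmless, since any standard indexing telescopes to $a_0/a_m$), hence $\prod_t(1+q_t)\le 2^m\prod_t q_t$ is polynomial in $\ell(w)$ with an explicit universal exponent (roughly $1+\log_{\phi}2\approx 2.44$, so $\ell(\al(w))\lesssim \ell(w)^{3.44}$), valid for all $w$ rather than asymptotically. One cosmetic difference: the paper first reduces to $F_2$, while you work in $F_n$ directly by noting that transvections in $x_1,x_2$ leave $e_3,\dots,e_n$ untouched; these are the same reduction. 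Both arguments yield the paper's remark that the polynomial can be chosen independently of $n$, but yours does so with a cleaner and fully effective estimate.
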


With a little bit of work, we obtain:

\begin{cor}\label{c:length}
There is an automorphism of the free group $\al$ and a polynomial $p$ depending only on the rank of $G$ and the number of relators of $G$ such that $\al(w)$ has zero exponent sum in one fixed generator for all relator words $w$, and such that $\ell(\al(w))\leq p(\ell(w))$ for all such $w$.
In particular, suppose $G$ is a BP--group with relator set $R$.  Then there is a polynomial $p$ depending only on the rank of $G$ and the number of relators of $G$, and a subgroup $H<G$ such that $H$ admits a surjection to $F_2$ and \[[G:H]\leq \max_{w\in R} p(\ell(w)).\]
\end{cor}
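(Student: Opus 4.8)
The plan is to pass to the abelianization and reduce the statement to clearing a single column of an integer matrix by an automorphism, and only afterwards to control length. Write $R=\{w_1,\dots,w_k\}$, so $k=|R|$ and the rank is $n\ge k+2$, and let $E$ be the $k\times n$ integer matrix whose $i$-th row is the exponent-sum vector of $w_i$. An automorphism $\al\in\Aut(F_n)$ acts on exponent-sum vectors through its abelianization $\bar\al\in GL_n(\bZ)$, via $\sigma(\al(w))=\bar\al\,\sigma(w)$; at the level of $E$ this replaces $E$ by $E\,\bar\al^{T}$, i.e. by a column operation through an arbitrary element of $GL_n(\bZ)$. Hence ``$\al(w_i)$ has zero exponent sum in the fixed generator $t$ for every $i$'' is exactly ``the $t$-column of $E\,\bar\al^{T}$ is zero.'' Since $\mathrm{rank}(E)\le k\le n-2<n$, a Hermite (or Smith) column reduction of $E$ yields a matrix with at least $n-\mathrm{rank}(E)\ge 2$ zero columns; choosing $\al$ to realize this reduction and relabelling so that one zero column is a chosen generator $t$ produces a \textbf{good} presentation of $G$ in which $t$ has zero exponent sum in all relators. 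The surplus zero column reflects the ``$+2$'' in the BP hypothesis and is what eventually makes the free quotient \emph{nonabelian}.

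The crux is the length estimate, where I would imitate the mechanism behind Theorem~\ref{t:main}. I would realize $\al$ as an explicit composite of elementary Nielsen automorphisms $x_a\mapsto x_a x_b^{\pm q}$ corresponding to the Euclidean steps of the column reduction of $E$. Each such move multiplies word length by at most $1+|q|$, so the total blow-up factor is at most $\prod_t(1+|q_t|)$ over the moves. The main obstacle is precisely that a naive iteration doubles length at every step and is exponential in the number of moves; the entire point, exactly as in the single-word case, is that the exponents $q_t$ are partial quotients of Euclidean algorithms run on integers of size at most $\ell:=\max_i\ell(w_i)$, that the intermediate Hermite entries of a $k\times n$ matrix with entries bounded by $\ell$ stay bounded by $\ell^{O(k)}$, and that only $O(n^2k)$ moves are needed. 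The continued-fraction bound on products of partial quotients then forces $\prod_t(1+|q_t|)\le \ell^{q(n,k)}$ for a polynomial exponent $q(n,k)$ depending only on $n$ and $k$. Consequently $\ell(\al(w))\le \ell^{\,q(n,k)}\,\ell(w)$ for every relator $w$, i.e. a polynomial bound $p(\ell(w))$ with $p$ determined by $n$ and $k$. (Strictly, the uniform factor is governed by the largest relator length $\ell=\max_i\ell(w_i)$ rather than by $\ell(w)$ itself; this is harmless, as it is exactly this maximum that enters the index estimate.)

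Finally I would feed the good presentation into the Baumslag--Pride construction \cite{BP}. Once the fixed generator $t$ has zero exponent sum in every relator, their argument yields a finite-index subgroup $H<G$ surjecting onto $F_2$ with $[G:H]$ at most linear in the length of the longest relator of the good presentation, namely the words $\al(w)$. Using the bound of the previous paragraph,
\[
[G:H]\le c\cdot\max_{w\in R}\ell(\al(w))\le \max_{w\in R}\bigl(c\,p(\ell(w))\bigr),
\]
and absorbing the constant $c$ into $p$ gives $[G:H]\le\max_{w\in R}p(\ell(w))$ with $p$ depending only on the rank $n$ and the number of relators $k$, as claimed. The one genuinely delicate point throughout is the simultaneous length control of the middle paragraph: clearing a common column of the \emph{entire} matrix with a \emph{single} automorphism while keeping the composite Nielsen word polynomially short, which is the natural multi-row analogue of Theorem~\ref{t:main}.
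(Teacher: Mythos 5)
Your proposal is correct and its skeleton coincides with the paper's: clear the exponent sums of one fixed generator in all relators by an automorphism realizing integer column operations on the exponent-sum matrix, then feed the resulting good presentation into the Baumslag--Pride construction \cite{BP}, whose index is linear in the longest relator of that presentation. Where you genuinely diverge is in how the polynomial length bound is assembled. The paper eliminates sequentially and triangularly: first clear $X_1(r_1),\dots,X_{n-1}(r_1)$, then clear $X_1(r_2),\dots,X_{n-2}(r_2)$ using only generators whose exponent sums in $r_1$ are already zero, and so on -- your Hermite reduction in slow motion, with your rank count $\mathrm{rank}(E)\le k\le n-2$ appearing as the remark that for the $k$-th relator one can only clear $X_1(r_k),\dots,X_{n-k}(r_k)$. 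The length control then comes for free: each stage is an instance of Theorem \ref{t:main} applied to the current (already lengthened) words, and a composition of boundedly many polynomials -- the number of stages depending only on $n$ and $k$ -- is again a polynomial. This sidesteps entirely what you single out as the ``genuinely delicate point.'' Your route instead asserts that intermediate Hermite entries stay bounded by $\ell^{O(k)}$ and that $O(n^2k)$ moves suffice; neither is justified as stated (unstructured Euclidean elimination can swell intermediate entries far beyond $\ell^{O(k)}$, and the move count carries a $\log\ell$ factor). Your argument survives nonetheless, because it never needs these sharp forms: with $n$ and $k$ fixed, any crude bound of the shape $\ell^{f(n,k)}$ on intermediate entries -- which even naive elimination gives, with $f$ exponential in $nk$ -- produces the required polynomial, exactly as in the paper. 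So there is no gap; the paper's composition trick buys simplicity, while your bookkeeping, if executed with a careful elimination order, would buy an explicit and much smaller degree for $p$. One small correction: your parenthetical that the surplus zero column is ``what eventually makes the free quotient nonabelian'' is loose -- the BP hypothesis enters their construction through the generator/relator count of the finite-index subgroup, not through a second zero column.
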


We remark that the polynomial can be chosen universally, which is to say independently of $n$.  We shall show in the proof of Theorem \ref{t:main} that there is a polynomial which works for $F_2$, and hence for all finite rank free groups.  The smallest degree that works may decrease as the rank gets large.

It might be guessed that every BP--group already surjects onto a nonabelian free group, in which case Corollary \ref{c:length} has no content.  However, we will prove:

\begin{thm}\label{t:lower}
Let $N$ be fixed.  Then there is a word $w\in F_n$ of length at most $(5N)!$ such that no subgroup of $F_n/\langle w\rangle$ of index at most $N$ admits a surjection to $F_2$.
\end{thm}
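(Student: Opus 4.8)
The plan is to reduce the statement to a question about finite permutation quotients of $G:=F_n/\langle w\rangle$ and then to build a single short relator that destroys all of the relevant ones. The starting observation is that surjecting onto $F_2$ is a strong property: since $A_5$ is a quotient of $F_2$, any subgroup $H\le G$ that surjects onto $F_2$ also surjects onto $A_5$. Suppose then that some $H\le G$ with $[G:H]=m\le N$ surjects onto $A_5$, and compose with the degree-$5$ action $A_5\hookrightarrow S_5$ to get an action of $H$ on $5$ points. Inducing this action from $H$ up to $G$ produces a permutation action of $G$ on $5m\le 5N$ points, that is, a homomorphism $\rho\colon G\to S_{5N}$. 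A direct check on the block over the trivial coset shows that $\rho(H)$ still surjects onto $A_5$, so $\rho(G)$ is \emph{nonsolvable}. Pulling back along the quotient map $F_n\to G$, the word $w$ lies in the kernel of a homomorphism $F_n\to S_{5N}$ with nonsolvable image. Consequently it suffices to produce $w$ with $\ell(w)\le (5N)!$ such that \emph{every} homomorphism $\phi\colon F_n\to S_{5N}$ with $\phi(w)=1$ has solvable image; equivalently, $G$ should possess no nonsolvable quotient inside $S_{5N}$.

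To organize the construction I would first dispose of the solvable homomorphisms for free. By a classical bound of Dixon, a solvable subgroup of $S_{5N}$ has derived length at most $d:=\lceil c\log(5N)\rceil$ for an absolute constant $c$, so if $w$ is chosen inside the derived subgroup $F_n^{(d)}$ then $\phi(w)=1$ holds automatically whenever $\phi$ has solvable image. This reduces the problem to the perfect part: I must choose $w\in F_n^{(d)}$ so that $\phi(w)\ne 1$ for every $\phi\colon F_n\to S_{5N}$ whose image $L$ is nonsolvable. For such $L$ one has $L^{(d)}=L^{(\infty)}\ne 1$, since the derived series of a finite group stabilizes within $\log_2|L|$ steps, and since $\phi(F_n^{(d)})=L^{(\infty)}$, the element $\phi(w)$ lands in the nontrivial perfect group $L^{(\infty)}$. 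The remaining task is therefore to find a single word in $F_n^{(d)}$ that maps to a nonidentity element of $L^{(\infty)}$ simultaneously for all of the finitely many nonsolvable $\phi$.

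The construction of this universal witness is the heart of the argument and the step I expect to be the main obstacle. The idea is that in any $L\le S_{5N}$ every element is realized by a word in two fixed generators of length less than $|L|\le (5N)!$, since the Cayley graph has diameter less than $|L|$; so for each nonsolvable $\phi$ there is a short commutator word witnessing nontriviality in $L^{(\infty)}$. The difficulty is to amalgamate these finitely many local witnesses into one global word without letting distinct witnesses cancel one another, and while keeping the total length below $(5N)!$. I would attempt this by arranging the witnesses in a free-group ``general position'' pattern, conjugating successive commutator witnesses by increasing powers of a fixed generator in a ping-pong/Stallings-type layout so that no free reduction merges contributions coming from different homomorphisms, and by exploiting that one needs only nontriviality in the perfect parts, which are generated by commutators and hence reached economically from $F_n^{(d)}$. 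The length accounting must then be pushed through to confirm the crude bound $\ell(w)\le (5N)!$; this is where the factor $5N$, the number of points coming from $A_5\hookrightarrow S_5$, and the factorial, the order of $S_{5N}$ and hence the universal Cayley-diameter bound, enter. Once such a $w$ is exhibited, the reduction in the first paragraph shows that no subgroup of $G$ of index at most $N$ can surject onto $F_2$, giving the theorem and the lower bound complementing Corollary~\ref{c:length}.
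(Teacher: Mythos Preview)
Your reduction in the first paragraph is correct and is exactly the route the paper takes: the paper invokes Ab\'ert's theorem for the finite group $S_{5N}$, which produces a word $w\in F_n$ with $w(g_1,\ldots,g_n)=1$ if and only if $\langle g_1,\ldots,g_n\rangle$ is solvable, and then reads off the length bound $\ell(w)\le (5N)!$ from Ab\'ert's explicit construction. So through the end of your second paragraph you have essentially rediscovered the reduction the paper uses; the paper then simply cites \cite{A} for the word itself.

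The gap is in your third paragraph, and it is a real one rather than just missing bookkeeping. First, the amalgamation you sketch must succeed not in $F_n$ but in every finite quotient simultaneously: for a fixed nonsolvable $\phi_0$, the element $\phi_0\bigl(\prod_i w_{\phi_i}^{x^{a_i}}\bigr)$ is a product of conjugates inside the finite group $\phi_0(F_n)$, and there is no ping-pong mechanism there; the factors with $i\ne 0$ can perfectly well cancel the one nontrivial factor, so ``no free reduction merges contributions'' is the wrong invariant. Second, and fatal for the stated bound, there are up to $((5N)!)^n$ homomorphisms $F_n\to S_{5N}$ to handle, so even if each witness had length below $(5N)!$ and the amalgamation worked, a word built from one witness per homomorphism would have length of order $((5N)!)^{n+1}$, not $(5N)!$. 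Ab\'ert's construction is not an amalgamation of per-homomorphism witnesses; it builds a single word by an iterated-commutator recursion whose length is controlled directly, and that is where the bound $(5N)!$ actually comes from. You should either cite \cite{A} as the paper does, or replace your third paragraph with such a recursive construction together with its length estimate.
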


This bound makes effective an example of R. Lyndon which appears on pages 114--115 in J. Stallings' article \cite{St}.  In that example, Lyndon produces for each $n$ and each sequence of $(n-1)n/2$ distinct powers of $2$ a one--relator group which does not surject onto a nonabelian free group.

To prove Theorem \ref{t:lower}, we will need the following result which can be found in \cite{A}:
\begin{thm}
Let $G$ be a finite group.  Then for all $n$ there exists a word $w\in F_n$ such that for all $g_1,\ldots, g_n$ satisfies $w$ if and only if the subgroup $\langle g_1,\ldots, g_n\rangle$ is solvable.
\end{thm}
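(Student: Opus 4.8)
The plan is to reduce the statement to a purely finite, structural fact about perfect groups, using that the finiteness of $G$ gives a uniform bound on the length of every derived series in sight. Throughout I read the statement as: for every tuple $(g_1,\dots,g_n)\in G^n$ one has $w(g_1,\dots,g_n)=1$ if and only if $\langle g_1,\dots,g_n\rangle$ is solvable. First I would fix an exponent $d$ large enough to serve two purposes at once. It should exceed the derived length of every solvable subgroup of $G$, for which $d\ge \log_2|G|$ suffices; and it should be at least the number of steps after which the derived series of any subgroup of a power $G^m$ with $m\le |G|^n$ has stabilized to its perfect core, for which $d$ of size roughly $|G|^n\log_2|G|$ suffices. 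The reason for passing to the derived subgroup $F_n^{(d)}$ is that substitution $x_i\mapsto g_i$ is a homomorphism carrying $F_n^{(d)}$ onto $\langle g_1,\dots,g_n\rangle^{(d)}$; hence for any $w\in F_n^{(d)}$ and any tuple $t=(g_1,\dots,g_n)$ the value $w(t)$ lies in $\langle t\rangle^{(d)}$. In particular, when $\langle t\rangle$ is solvable we get $w(t)=1$ for free, so the ``if'' direction of the equivalence holds for every $w\in F_n^{(d)}$.

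The whole content is thus the reverse direction: exhibiting a single $w\in F_n^{(d)}$ that is nontrivial at every non-solvable tuple simultaneously. I would let $T\subseteq G^n$ be the finite set of non-solvable tuples and form the homomorphism $\phi\colon F_n\to \prod_{t\in T}G$ sending $x_i$ to the vector of its $i$-th coordinates. With $K=\phi(F_n)$ and $P=K^{(d)}=\phi\big(F_n^{(d)}\big)$, the choice of $d$ guarantees that $P$ is the perfect core of $K$, hence genuinely perfect, and that each coordinate projection carries $P$ onto $\langle t\rangle^{(d)}$, the nontrivial perfect core of the non-solvable group $\langle t\rangle$. So $P$ is a perfect subdirect product of the nontrivial finite groups $\langle t\rangle^{(d)}$, $t\in T$, and a word $w\in F_n^{(d)}$ solves the problem exactly when $\phi(w)\in P$ has all of its coordinates nontrivial.

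The crux, and the step I expect to be the main obstacle, is the following purely group-theoretic lemma: a finite perfect group is never the union of its proper normal subgroups; equivalently, it has an element lying in no maximal normal subgroup. Granting it, the kernels $\ker(\pi_t|_P)$ are proper normal subgroups of $P$, so some $y\in P$ satisfies $\pi_t(y)\ne 1$ for all $t\in T$, and any $w\in F_n^{(d)}$ with $\phi(w)=y$ then has $w(t)=\pi_t(y)\ne 1$ at every non-solvable tuple, finishing the proof. To prove the lemma I would take the maximal normal subgroups $M_1,\dots,M_r$ of the perfect group $P$: each quotient $S_i=P/M_i$ is simple and, being a quotient of a perfect group, is perfect, hence nonabelian. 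Putting $R=\bigcap_i M_i$, the quotient $P/R$ is a subdirect product of the nonabelian simple groups $S_1,\dots,S_r$, and here I would invoke the classical structure theorem for such subdirect products: the coordinates fall into blocks on each of which $P/R$ is a diagonally embedded single simple group, and $P/R$ is the direct product of these diagonals. Choosing a nontrivial element in each block produces an element of $P/R$ with every coordinate nontrivial, that is, an element in no $M_i/R$; any lift lies in no $M_i$ and hence in no proper normal subgroup. The genuinely delicate points to check are that $d$ can be chosen uniformly so that $P$ is really perfect --- a subdirect product of perfect groups need not itself be perfect, as fiber products over $A_5$ already show --- and that replacing the projection kernels by maximal normal overgroups preserves the covering; both are consequences of the finiteness of $G$ and the functoriality of the derived series under homomorphisms.
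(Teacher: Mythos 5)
Your proposal is correct, but the comparison here is special: the paper never proves this theorem at all --- it is imported verbatim from Ab\'ert \cite{A} as an ingredient for the lower bound (Theorem~\ref{t:lower}), so the only argument to measure yours against is Ab\'ert's own, and your reconstruction follows essentially that strategy. The two pillars of your proof --- (i) working inside $F_n^{(d)}$ with $d$ chosen past both the derived length of every solvable subgroup of $G$ and the stabilization point of the derived series of the image $K\le\prod_{t\in T}G$, so that the ``solvable implies satisfied'' direction is automatic and $P=K^{(d)}$ is genuinely perfect, and (ii) extracting one simultaneous witness from the fact that a finite perfect group is not the union of finitely many proper normal subgroups --- are the same two pillars as in \cite{A}. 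Your proof of the covering lemma (pass to maximal normal subgroups, observe that simple quotients of a perfect group are nonabelian, then apply the classical block/diagonal structure theorem for subdirect products of nonabelian simple groups) is sound, and you correctly isolate where finiteness does real work: your fiber-product caveat (e.g.\ $SL(2,5)\times_{A_5}SL(2,5)$, a non-perfect subdirect product of perfect groups) is exactly the reason perfection of $P$ must come from stabilization of the derived series rather than from $P$ being a subdirect product of the perfect groups $\langle t\rangle^{(d)}$. The one case you leave implicit, $T=\emptyset$ (for instance $G$ solvable), is harmless: there the equivalence holds vacuously for any $w\in F_n^{(d)}$.
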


The lower bound will follow easily from this result.

\section{Acknowledgements}
The author wishes to thank M. Ab\'ert, under whose auspices most of this research was done while the author was an undergraduate.  The author also thanks J. Huizenga, S. Isaacson, and A. Silberstein for critically reading some early manuscripts, and T. Church for several useful comments.  The author also thanks the referee for comments which led to the simplification of many of the arguments and exposition.  The author is partially supported by an NSF Graduate Research Fellowship.

\section{Automorphism orbits of words and the proofs of the results}
We first recall the well-known fact about a generating set for $\Aut(F_n)$: $\Aut(F_n)$ is finitely generated by so-called elementary Nielsen transformations (see \cite{LS} for more details).  If $X$ is a free generating set for $F_n$, these amount to replacing some $x\in X$ with $x^{-1}$, or for distinct $x,y\in X$, replacing $x$ by $x\cdot y$.  Though it is not standard, we include $x\mapsto x\cdot y^n$ for each $n\in \bZ$ in the definition of elementary Nielsen transformations.  It is evident that the application of elementary Nielsen transformations to a word $w$ runs the Euclidean algorithm on the vector $(X_1(w),\ldots,X_n(w))$, where $X_i(w)$ is the exponent sum of the generator $x_i$ in $w$.

\begin{proof}[Proof of Theorem \ref{t:main}]
Clearly it suffices to prove the statement for $F_2$.  Let $w\in F_2$ be fixed.  Suppose that $x$ and $y$ are generators with exponent sums $X(w)$ and $Y(w)$ in $w$ respectively.  Clearly $|X(w)|+|Y(w)|\leq \ell(w)$.  It is standard that there are universal constants $C$ and $D$ such that the algorithm will terminate in $C\log(\ell(w))+D$ steps.

Let $L_x(w)$ and $L_y(w)$ denote the number of occurrences of $x$ and $y$ in $w$.  We have that \[\ell(w)=L_x(w)+L_y(w)+L_{x^{-1}}(w)+L_{y^{-1}}(w).\]  Replacing generators by their inverses if necessary, we may assume that $X(w)$ and $Y(w)$ are positive.  Suppose that $X(w)\leq Y(w)$.  We may choose an $c_1$ such that $c_1\cdot X(w)\leq Y(w)$ but $(c_1+1)\cdot X(w)>Y(w)$.  Replacing $x$ by $xy^{-c_1}$ will result in a new word $w'$ which satisfies $Y(w')=Y(w)-c_1\cdot X(w)$, $L_y(w')\leq L_y(w)+c_1\cdot L_x(w)$, and $L_x(w')\leq L_x(w)$.  It follows that \[\ell(w')\leq (c_1+1)\ell(w).\]

Note that if $Y(w)\geq X(w)$ then $X(w')\geq Y(w')$.  Applying the algorithm $n$ times will result in a word $w^{(n)}$, and suppose that \[\ell(w^{(n)})\leq (c_n+1)\cdot \ell(w^{(n-1)}),\] so that \[\ell(w^{(n)})\leq\left(\prod_{i=1}^n (c_i+1)\right)\cdot \ell(w).\]  Note that $c_i\cdot X(w^{(i)})\leq Y(w^{(i)})$ or $c_i\cdot Y(w^{(i)})\leq X(w^{(i)})$.  Note that $c_1\leq \ell(w)/2$ and \[\max\{X(w'),Y(w')\}\leq \ell(w)/2.\]  We suppose inductively that \[\max\{X(w^{(i)}),Y(w^{(i)})\}\leq \ell(w)/2^{i/2}.\]  We also suppose inductively that $c_i\leq \max\{\ell(w)/2^{i/2},1\}$.  It is possible that at any step, the algorithm will terminate because $X(w^{(i)})=Y(w^{(i)})$.

Suppose $(1/2)Y(w^{(i)})\leq X(w^{(i)})\leq Y(w^{(i)})$.  Then $c_{i+1}=1$, and \[\max\{X(w^{(i+1)}),Y(w^{(i+1)})\}\leq \ell(w)/2^{i/2}.\]  Note that since $Y(w^{(i)})-X(w^{(i)})\leq (1/2)Y(w^{(i)})$, we will have \[\max\{X(w^{(i+2)}),Y(w^{(i+2)})\}\leq \ell(w)/2^{(i/2+1)}.\] Otherwise, we may assume $X(w^{(i)})<(1/2)Y(w^{(i)})$.  If $X(w^{(i)})\neq 1$ we will have \[\max\{X(w^{(i+1)}),Y(w^{(i+1)})\}\leq \ell(w)/2^{(i/2+1)},\] and $c_{i+1}\leq (1/2)Y(w^{(i)})$.  If $X(w^{(i)})=1$ then the algorithm will terminate.

We can now estimate the quantity \[\prod_{i=1}^n (c_i+1).\]  Let $M=M(w)$ be the least integer greater than $C\log(\ell(w))+D$.  We have that $n\leq M$.  At each step of the algorithm, we have that $c_i+1\geq 2$.  Also it is possible that $c_i=\max\{X(w^{(i)}),Y(w^{(i)})\}$, but this can only happen once.  Otherwise, we have argued by induction that $c_i$ decays like $\ell(w)/2^{i/2}$.  We may thus estimate \[P=\prod_{i=1}^n (c_i+1)\leq \ell(w)\cdot 2^M\cdot\prod_{i=1}^M\left(\frac{\ell(w)}{2^{i/2}}+1\right).\]  The first factor is from the possibility that $c_i=\max\{X(w^{(i)}),Y(w^{(i)})\}$, which we may assume to be no more than $\ell(w)-1$.  The second comes from the possibility of $c_i=1$ at any step, and the third is the estimate for $|c_i+1|$ in the remaining possible case.

Note that the third factor can be rewritten as \[\prod_{i=1}^M\left(\frac{\ell(w)+2^{i/2}}{2^{i/2}}\right).\]  At every step of the algorithm we assume that $1\leq c_i\leq \ell(w)/2^{i/2}$.  In particular, we may estimate the third term to be dominated by \[\prod_{i=1}^M\left(\frac{2\ell(w)}{2^{i/2}}\right).\]

For compactness of notation, write $x$ for $\ell(w)$.  Notice that $M=M(x)$ depends on $x$, and varies like $\log x$.  We take the logarithm of the estimate on $P$.  We obtain the expression \[\log x+2M\log 2+M\log x-\frac{(M^2+M)}{4}\cdot\log 2.\]  For $x$ sufficiently large, we may replace $M$ by a constant $N$ times $\log x$.  Rewriting, we get \[\log x+2N\log 2\log x+N(\log x)^2-(\log 2)\cdot\frac{N^2(\log x)^2+N\log x}{4}.\]

We can replace $N$ by any sufficiently large constant.  We may therefore suppose that the coefficient of $(\log x)^2$ is negative.  Let $K$ be large enough so that \[\log x+2N\log 2\log x-\frac{\log 2}{4}\cdot N\log x -K\log x\] is negative for all $x$ sufficiently large.  It follows that if $0\ll\ell(w)$, \[P\cdot\ell(w)^{-K}\leq 1,\] the desired conclusion.
\end{proof}

To establish Corollary \ref{c:length}, we note the following observation from the proof of the main result of \cite{BP}: if $G=\langle S\mid R\rangle$ is a BP--group with a given good presentation then $G$ has a finite index subgroup $H$ which surjects to $F_2$, and which has linear index in the length of the relators of $G$.

\begin{proof}[Proofs of Corollary \ref{c:length}]
In order to achieve their main theorem, Baumslag and Pride assume that some generator appears with zero exponent sum in each one of the relators.  One way to produce such a presentation of $G$ is as follows: choose one relator and apply automorphisms so that the exponent sum in all the generators is zero except for at most one.  This is possible, since we may order the generators as $x_1,\ldots,x_n$ with exponent sums $X_1(r_1),\ldots,X_n(r_1)$ in the first relator word $r_1$, and run the Euclidean algorithm on successive pairs of exponent sums.  The exponent sum $X_n(r_1)$ cannot be generally eliminated unless we are very lucky.

We then take the next relator $r_2$ and eliminate $X_1(r_2)$.  This can be done by applying automorphisms that run the Euclidean algorithm on $X_1(r_2)$ and $X_2(r_2)$.  This way, $X_1(r_1)$ will remain unchanged.  We repeat this procedure in order to eliminate $X_1(r_2),\ldots,X_{n-2}(r_2)$.  Repeating this procedure, we can produce a good presentation on a general BP--group.  Note that it is in fact essential that $G$ be a BP--group: indeed, for the $k^{th}$ relator $r_k$, we can only eliminate $X_1(r_k),\ldots,X_{n-k}(r_k)$.  We finally note that composing two polynomials results in a polynomial, so that we may apply Theorem \ref{t:main}.
\end{proof}

\begin{proof}[Proof of Theorem \ref{t:lower}]
In \cite{A}, Ab\'ert works out the case of $G=S_{5N}$ in detail.  He shows that for each $n$ and $N$ there is a word $w\in F_n$ such that no subgroup of index at most $N$ of $F_n/\langle w\rangle$ admits a surjection to $F_2$.  Furthermore, the length of $w$ is no longer than the longest word in $S_{5N}$ with respect to any pair of generators, and thus has length at most $(5N)!$.
\end{proof}

\end{document}